\newcommand {\RR}  {\mathbb{R}}
\newcommand {\QQ}  {\mathbb{Q}}
\newtheorem{thm}{Theorem}[section]
\newtheorem{cor}[thm]{Corollary}
\newtheorem{rem}[thm]{Remark}
\newtheorem{conj}[thm]{Conjecture}
\newtheorem{exam}[thm]{Example}
\begin{document}

\title{Pairs of Pythagorean triangles with given catheti ratios}
\author{M. Ska\l ba \and  M. Ulas }
\begin{abstract}
In this note we investigate the problem of finding pairs of Pythagorean triangles $(a, b, c), (A, B, C)$, with given catheti ratios $A/a, B/b$. In particular, we prove that there are infinitely many essentially different ("non-similar") pairs of Pythagorean triangles  $(a, b, c), (A, B, C)$ satisfying given proportions, provided that $Aa\neq Bb$.
\end{abstract}

\address{Mariusz Ska{\l}ba, Institute of Mathematics\\
University of Warsaw\\
Banacha 2\\
02-097 Warszawa, Poland}
\email{skalba@mimuw.edu.pl}

\address{Maciej Ulas, Institute of Mathematics\\
Jagiellonian University  \\
\L ojasiewicza 6 \\
30-348 Krak\'{o}w, Poland}
\email{maciej.ulas@uj.edu.pl}

\subjclass[2010]{11D25}

\keywords{Pythagorean triples, elliptic curves, rational points}

\maketitle

\section{Introduction}\label{sec1}
In \cite{ML} the author investigated the following problem stated by Leech (as was remarked by Smyth in \cite{S}):
\\
\noindent {\it Find two rational right-angled triangles on the same base whose heights are in the ratio $n:1$ for $n$ an integer greater than 1.}
\\
By considering an equivalent elliptic curve problem, he find parametric solutions for certain values of $n$ and present results of a numerical search for solutions with $n\in\{1,\ldots,999\}$. Motivated by his findings we deal with the general problem of finding pairs of Pythagorean triangles with given catheti ratios. More precisely, throughout the paper $(a,b,c)$ will denote a Pythagorean triple, i.e. a triple of positive integers satisfying the condition
$$
a^2+b^2=c^2\,\mbox{ and } a,b,c\in\mathbb N=\{1,2,\ldots\}.
$$
Given two such triples $(a,b,c)$ and $(A,B,C)$ we are interested whether there exist essentially different pairs $(a',b',c')$, $(A',B',C')$ satisfying
\begin{equation}\label{main}
\frac{A'}{a'}=\frac{A}{a},\ \ \frac{B'}{b'}=\frac{B}{b}.
\end{equation}
As we will see in the sequel under some mild conditions on the pair of triples $(a,b,c), (A,B,C)$, the problem has a positive answer. More precisely, in Section \ref{sec2} we prove that there are infinitely many essentially different pairs $(a',b',c'), (A',B',C')$ solving (\ref{main}) provided that $Aa\neq Bb$. In Section \ref{sec3} we investigate the case $Aa=Bb$ and prove that there are infinitely many pairs $(a,b,c), (A,B,C)$ such that the system (\ref{main}) has infinitely many solutions $(a',b',c'), (A',B',C')$. We also present results of our numerical search and observe that for certain pairs $(a,b,c), (A,B,C)$ there is no non-trivial solution. Finally, in the last section we investigate the general problem of finding pairs of Pythagorean triangles $(a, b, c), (A, B, C)$ such that $A/a=\mu, B/b=\nu$, where $\mu, \nu$ are given rational numbers. We reduce the problem to the case when $\mu=1$ and prove that the set of those $\nu$ is dense in the Euclidean topology in $\QQ^{+}$.



\section{The case $Aa\neq Bb$}\label{sec2}

In this section we are interested in finding the solutions of system (\ref{main}) under the assumption $Aa\neq Bb$. More precisely, together with given triples $(a,b,c)$ and $(A,B,C)$ we will consider the following elliptic curve
\begin{equation}
H: v^2=u(a^2u+b^2)(A^2u+B^2)
\end{equation}
The basic observation is that each point $Q=(u,v)\in H(\mathbb Q)$ of the form $2P$ ($P\in H(\mathbb Q)$) gives the desired pair of triangles $(a',b',c')$, $(A',B',C')$.
Namely, if $(u,v)=P+P$ then $u=s^2$, $a^2u+b^2=t^2$ and $A^2u+B^2=r^2$ with some $s,t,r\in\mathbb Q$. Hence
$$
\left\{
\begin{array}{ccccc}
(as)^2 & + & b^2 & = & t^2\\
(As)^2 & + & B^2 & = & r^2
\end{array}
\right.
$$
and $(a',b',c')=(as,b,t)$, $(A',B',C')=(As,B,r)$ do work. The point $Q'=(1,cC)$ corresponds to the initial situation $(a',b',c')=(a,b,c)$, $(A',B',C')=(A,B,C)$. Hence the natural question in this context is whether the point $Q'$ is of infinite order or (at least) is the rank of $H$ positive? We will investigate both problems in the sequel.

After change of variables $x=a^2A^2u$, $y=a^2A^2 v$ we get a standard form
\begin{equation}\label{Ecurve}
E: y^2=x(x+A^2b^2)(x+a^2B^2),
\end{equation}
and the image of the point $Q'$ takes the form $Q^{\ast}=(a^2A^2,a^2A^2cC)$. The group $E(\mathbb{Q})$ contains exactly 3 points of order 2, hence
the torsion part $\operatorname{Tor}(E(\mathbb{Q}))$ cannot be cyclic. By the Mazur theorem $\operatorname{Tor}(E(\mathbb{Q}))=\mathbb{Z}_{2}\times\mathbb{Z}_{2n}$ for some $n\in\{1,2,3,4\}$. If $P+P=(0,0)$ then $|P|=4$, hence $n=2$ or $n=4$. It follows that if $Q^{\ast}$ has a finite order then $|Q^{\ast}|=4$ and $|2Q^{\ast}|=2$. By duplication formula
$$
x(2Q^{\ast})=\frac{(a^2A^2-B^2b^2)^2}{4c^2C^2}
$$
we infer that $|2Q^{\ast}|=2$ if and only if $Aa=Bb$. Geometrically this means that $(a,b,c)$ and $(A,B,C)$  are " skew-similar". In other cases the point $Q^{\ast}$ has infinite order. In this way we have proved

\begin{thm}
If $Aa\neq Bb$ then there exist infinitely many essentially different ("non-similar") pairs of Pythagorean triangles  $(a',b',c')$, $(A',B',C')$ satisfying given proportions {\rm (\ref{main})}.
\end{thm}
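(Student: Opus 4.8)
The plan is to leverage the elliptic curve $E$ defined in (\ref{Ecurve}) together with the analysis already carried out in the preceding discussion. The key point established above is that, under the hypothesis $Aa\neq Bb$, the point $Q^{\ast}=(a^2A^2,a^2A^2cC)$ has infinite order in $E(\QQ)$: indeed $Q^{\ast}$ cannot be torsion, since a torsion point $P$ with $2P=(0,0)$ would have order $4$, forcing $|2Q^{\ast}|=2$, which by the duplication formula for $x(2Q^{\ast})$ happens precisely when $Aa=Bb$. So the first step is simply to record that $E(\QQ)$ has a point of infinite order, hence $E(\QQ)$ is infinite; transporting this back through the inverse of the change of variables $x=a^2A^2u$, $y=a^2A^2v$, the curve $H\colon v^2=u(a^2u+b^2)(A^2u+B^2)$ also has infinitely many rational points, with $Q'=(1,cC)$ of infinite order.

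The second step is to pass from rational points on $H$ to the pairs of Pythagorean triangles. As explained in the excerpt, every point of the form $2P$ with $P\in H(\QQ)$ yields a point $Q=(u,v)$ with $u$, $a^2u+b^2$ and $A^2u+B^2$ all squares of rationals $s^2,t^2,r^2$, and then $(a',b',c')=(as,b,t)$, $(A',B',C')=(As,B,r)$ solves (\ref{main}) after clearing denominators. Since $Q'$ has infinite order, the subgroup $2H(\QQ)$ is infinite, so we obtain infinitely many such points $Q$, and hence infinitely many solution pairs. The only remaining issue is to argue that infinitely many of these pairs are genuinely distinct as triangles (up to similarity within each pair), i.e.\ that we are not merely re-scaling a single pair: this follows because the ratio $c'/a'=t/(as)$ is a non-constant rational function of $u=x/(a^2A^2)$, so distinct values of $u$ coming from distinct multiples of $Q^{\ast}$ give non-proportional triangles $(a',b',c')$.

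The main obstacle — and really the only subtle point — is the "essentially different" / "non-similar" bookkeeping: one must be careful about what equivalence is being used (similarity of each individual triangle, or simultaneous similarity of the pair $(a',b',c'),(A',B',C')$) and verify that the infinitely many points on $H$ do not all collapse to finitely many equivalence classes. I expect this to be handled by the observation that the height (or the $x$-coordinate) of $nQ^{\ast}$ grows without bound as $n\to\infty$, so the associated value of $u$, and therefore the shape of the triangles, takes infinitely many values; standard facts about canonical heights on elliptic curves make this rigorous, though for the purposes of the theorem an elementary injectivity argument on the map $n\mapsto u(nQ^{\ast})$ suffices. Everything else is the routine translation between the curve and the triangles already spelled out before the statement.
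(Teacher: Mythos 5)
Your proposal is correct and follows essentially the same route as the paper: the paper's proof of this theorem is precisely the discussion preceding the statement (the curves $H$ and $E$, the point $Q^{\ast}$, the Mazur-theorem torsion analysis, and the duplication formula showing $|2Q^{\ast}|=2$ iff $Aa=Bb$), and you reproduce that argument faithfully. Your added remark on verifying that distinct multiples of $Q^{\ast}$ yield non-similar pairs is a reasonable completeness point that the paper leaves implicit, but it does not change the method.
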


\section{The case $Aa=Bb$}\label{sec3}

In previous section we have proved that provided $Aa\neq Bb$, then there are infinitely many Pythagorean triangles $(a',b',c')$, $(A',B',C')$ satisfying given proportions {\rm (\ref{main})}.

What is going on in the case $Aa=Bb$? First of all, let us note that primitivity of triangles $(a, b, c), (A, B, C)$ immediately implies that $A=b, B=a$. Then $C=c$ and we deal with the case of cross-similarity of pairs of Pythagorean triangles. In other words, we are interested in the existence of rational points on the curve
$$
E_{a,b}:\;y^2=x(x+a^4)(x+b^4).
$$
We prove \begin{thm}
There are infinitely many primitive Pythagorean triples $(a,b,c)$ such that the elliptic curve
$$
E_{a,b}:\;y^2=x(x+a^4)(x+b^4)
$$
has positive rank.
\end{thm}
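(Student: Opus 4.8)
The plan is to pin down on $E_{a,b}$ one extra rational point whose existence is governed by a second conic condition on $(a,b)$, and then to show that the pairs $(a,b)$ obeying both conditions lie on an elliptic curve of positive rank. First I would spot the point: on $E_{a,b}$ consider the point with abscissa $x_0=ab^3$. Since $x_0+a^4=a(a+b)(a^2-ab+b^2)$ and $x_0+b^4=b^3(a+b)$, one has $x_0(x_0+a^4)(x_0+b^4)=a^2b^6(a+b)^2(a^2-ab+b^2)$, so $\bigl(ab^3,\,ab^3(a+b)\sqrt{a^2-ab+b^2}\bigr)$ is a rational point of $E_{a,b}$ as soon as $a^2-ab+b^2$ is a rational square; for instance for the triple $(8,15,17)$ one has $a^2-ab+b^2=169$ and the point $(27000,8073000)$. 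Hence it suffices to exhibit infinitely many coprime pairs $(a,b)$ with $a^2+b^2$ a square (so that $(a,b,c)$ is Pythagorean) and $a^2-ab+b^2$ a square, and then to argue that for all but finitely many of them the associated point has infinite order.

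Next I would analyze the curve $\mathcal C$ of such pairs. Writing $a^2+b^2=c^2$, $a^2-ab+b^2=d^2$, the smooth model of this locus in $\mathbb P^3$ is a curve of genus $1$, and it carries the rational point $(a\!:\!b\!:\!c\!:\!d)=(8\!:\!15\!:\!17\!:\!13)$, so $\mathcal C$ is an elliptic curve over $\QQ$. To see that its rank is positive I would parametrize the Pythagorean relation by $a/b=(\tau^2-1)/(2\tau)$, which turns $a^2-ab+b^2=\square$ into $w^2=\tau^4-2\tau^3+2\tau^2+2\tau+1$; the Jacobian of this quartic model is $V^2=X(X-2)(X+4)$, on which $(8,24)$ has infinite order (e.g.\ $2\cdot(8,24)$ has abscissa $9/4$, so by the Lutz--Nagell theorem it is not a torsion point). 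As $\mathcal C$ has a rational point, it is isomorphic over $\QQ$ to its Jacobian, so $\mathcal C(\QQ)$ is infinite; since $(a\!:\!b\!:\!c\!:\!d)\mapsto a/b$ is a finite map $\mathcal C\to\mathbb P^1$, infinitely many distinct values of $a/b$ occur, and distinct values give distinct primitive triples.

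Finally I would pass to the generic point. Over the function field $K=\QQ(\mathcal C)$ the coordinates $a,b,d$ lie in $K$ with $d^2=a^2-ab+b^2$, so $P=\bigl(ab^3,\,ab^3(a+b)d\bigr)$ is a $K$-rational point of $E_{a,b}$, and its abscissa $ab^3$ is a non-constant function on $\mathcal C$. On the other hand the full $2$-torsion together with the order-$4$ point $Q^{\ast}=(a^2b^2,a^2b^2c^2)$ generates a subgroup $\ZZ/2\times\ZZ/4$, which is all of the geometric torsion (as one checks by reducing one convenient fibre), and the abscissae of those torsion points are $0,-a^4,-b^4,\pm a^2b^2$ --- none of them equal to $ab^3$ as a function of $(a,b)$. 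Hence $P$ has infinite order in $E_{a,b}(K)$, so the family $\{E_{a,b}\}$ over $\mathcal C$ has generic rank $\ge 1$, and by Silverman's specialization theorem the specialization of $P$ remains of infinite order for all but finitely many points of $\mathcal C(\QQ)$. Combined with the previous step, this yields infinitely many primitive Pythagorean triples $(a,b,c)$ with $E_{a,b}$ of positive rank.

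The crux is the middle step: guessing the curve $\mathcal C$, producing an explicit Weierstrass model for it, and verifying that this model carries a point of infinite order. Once the point $x_0=ab^3$ is noticed the reduction is immediate, and the concluding specialization argument is routine; the only other thing to check carefully is that the geometric torsion of the family $E_{a,b}$ over $\QQ(\mathcal C)$ really is no bigger than $\ZZ/2\times\ZZ/4$, which is a finite computation with one fibre.
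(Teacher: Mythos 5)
Your argument is correct, and at its core it is the same construction as the paper's, found from ``the other leg'': the paper evaluates $f(x)=x(x+a^4)(x+b^4)$ at $x_0=-a^3b$, where $f(x_0)=\bigl(a^3b(a-b)\bigr)^2(a^2+ab+b^2)$, while you use $x_0=ab^3$ and the condition $a^2-ab+b^2=\square$. After parametrizing the Pythagorean relation the two conditions become the same quartic up to $\tau\mapsto-\tau$ (the paper's $W^2=U^4+2U^3+2U^2-2U+1$ versus your $w^2=\tau^4-2\tau^3+2\tau^2+2\tau+1$), and the Jacobians agree: your $V^2=X(X-2)(X+4)$ is the paper's $Y^2=(X-24)(X-6)(X+30)$ under $X\mapsto 9X+6$, with your point $(8,24)$ and their $(42,-216)$ both certifying rank $\geq 1$; so you end up producing the same families of triples. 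Where you genuinely diverge --- and in my view improve on the paper --- is in the two concluding steps. To get infinitely many distinct primitive triples you just use that the map to the $a/b$-line is finite; the paper instead runs a Gr\"{o}bner-basis computation showing that all but finitely many of the curves $E_{a_n,b_n}$ are pairwise non-isomorphic, which is more than is needed. And you actually prove the constructed point is non-torsion (generic torsion $\ZZ/2\times\ZZ/4$ with abscissae $0,-a^4,-b^4,\pm a^2b^2$, none equal to $ab^3$, then Silverman specialization), whereas the paper simply asserts ``point of infinite order.'' One detail you should tighten: a rational point of your curve $\mathcal{C}$ (equivalently of the quartic, via $a=\tau^2-1$, $b=2\tau$) may have $ab<0$, in which case the \emph{positive} legs $|a|,|b|$ satisfy $|a|^2+|a||b|+|b|^2=\square$ rather than your condition, and the rational point then sits at $x_0=-|a|^3|b|$ rather than at $|a||b|^3$. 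The clean fix is to carry both candidate abscissae $ab^3$ and $-a^3b$, one of which always works; amusingly, the paper's own displayed formula $(-a_n^3b_n,\dots)$ trips over exactly this sign, since its example point on $E_{8,15}$ is $(+8^3\cdot 15,\dots)$.
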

\begin{proof}
Let $a=2uv, b=u^2-v^2, c=u^2+v^2$ be a Pythagorean triple and write $f(x)=x(x+a^4)(x+b^4)$. We note the crucial identity
$$
f(-a^{3}b)=\left(a^3b(b-a)\right)^2(u^4+2u^3v+2u^2v^2-2uv^3+v^4).
$$
In other words $f(-a^3b)$ is a square if and only if there is a rational point on the quartic curve
$$
C:\;W^2=U^4+2U^3+2U^2-2U+1.
$$
The curve $C$ is birationally equivalent with the elliptic curve
$$
E':\;Y^2=(X-24)(X-6)(X+30),
$$
via the mapping
$$
\phi:\;C\ni (U,W)\mapsto (X,Y)\in E',
$$
where
$$
(X,Y)=(6(3U^2+3U+1-3W), 54(2U^3+3U^2+2U-1-(1+2U)W)),
$$
and the inverse $\phi^{-1}:\;E'\ni (X,Y)\mapsto (U,W)\in C$ is given by
$$
(U,W)=\left(\frac{72+Y-3X}{6(X+3)},\;\frac{Y^2+162Y+6588-2X^3-9X^2}{36(X+3)^2}\right).
$$
A quick computation with {\sc Magma} \cite{Mag}, reveals that rank of $E'(\QQ)$ is equal to 1. One can check that the generator of the infinite part is $P=(42,-216)$.

Now we compute multiplies $nP=(X_{n},Y_{n})$ for $n=1,2,\ldots$ and then consider corresponding points $(U_{n}, W_{n})=\phi^{-1}(nP)$. By writing $U_{n}=u_{n}/v_{n}, W_{n}=w_{n}/v_{n}^2$ with $\gcd(u_{n}w_{n},v_{n})=1$ we get the triple $(a_{n},b_{n},c_{n})=(2u_{n}v_{n},|u_{n}^2-v_{n}^2|,u_{n}^2+v_{n}^2)$ which leads to the curve $E_{a_{n},b_{n}}$ with point of infinite order
$$
(x_{n},y_{n})=(-a_{n}^3b_{n},4a_{n}b_{n}(b_{n}-a_{n})w_{n}).
$$

In order to finish the proof we need to know that for a given pair $(a_{n},b_{n})$ there are only finitely many pairs $(a_{m}, b_{m})$ with $m>n$ such that the curves $E_{a_{n},b_{n}}$, $E_{a_{m},b_{m}}$ are isomorphic. This can be seen as follows: by standard properties of elliptic curves we know that the curves $E_{a,b}, E_{A,B}$ are isomorphic if and only there is a linear isomorphism $\psi:\;E_{a,b}:\;(x,y)\mapsto (p^2x+q,p^3y)\in E_{A,B}$ for some $p, q\in\mathbb{Q}$. Let $f_{a,b}(x,y)$ be the polynomial defining the curve $E_{a,b}$. We see that $E_{a,b}\simeq E_{A,B}$ if and only if
$$
f_{A,B}(p^2x+q,p^3y)=Cf_{a,b}(x,y),
$$
for some $C\in\mathbb{Q}$. By comparing the coefficients on both sides we get that $C=p^6$ and need to consider the following system of equations:
\begin{equation*}
\begin{cases}
\begin{array}{lll}
  q(A^4+q)(B^4+q) & = & 0 \\
  p^2(A^4B^4-a^4b^4p^4+2(A^4+B^4)q+3q^2) & = & 0\\
  p^4(A^4+B^4-(a^4+b^4)p^2+3q) & = & 0
\end{array}
\end{cases}.
\end{equation*}
Let $G$ be the Gr\"{o}bner basis of the polynomials defining the above system in the ring $\mathbb{Q}[a,b,A,B][p,q]$. In other words we treat $a, b, A, B$ as constants. We have $|G|=14$ and note that $G\cap \mathbb{Q}[a,b,A,B]=\{F\}$, where
\begin{align*}
F=&(A b-a B)(A b + a B)(a A - b B)(a A + b B)\times\\
  &(A^2 b^2+a^2 B^2)(a^2 A^2 + b^2 B^2)\times \\
  &(a^4 A^4 - A^4 b^4 - a^4 B^4)(A^4 b^4 + a^4 B^4 - b^4 B^4)\times \\
  &(a^4 A^4 - A^4 b^4 + b^4 B^4) (a^4 A^4 - a^4 B^4 + b^4 B^4).
\end{align*}
Assuming that $a, b, A, B$ are positive we see that the only possibility for vanishing of $F(a,b,A,B)$ is the condition $(Ab-aB)(aA-bB)=0$. In other words, if $a_{n}, b_{n}$ and $a_{m}, b_{m}$ with $n\neq m$ come from our construction we need to have $a_{n}a_{m}=b_{n}b_{m}$ or $a_{n}b_{m}=a_{m}b_{n}$. However, let us note that for given $n\in\mathbb{N}$ the system of equations
$$
2uva_{n}=|u^2-v^2|b_{n},\quad w^2=u^4+2u^3v+2u^2v^2-2uv^3+v^4
$$
has only finitely many solutions in integers $u, v$. This means that there are only finitely many values of $m>n$ such that $a_{n}a_{m}=b_{n}b_{m}$. Similar reasoning works in the case of the equality $a_{n}b_{m}=a_{m}b_{n}$. Thus, in both cases, for only finitely many values of $m>n$ we have that the curves $E_{a_{n},b_{n}}, E_{a_{m},b_{m}}$ are isomorphic. In consequence we can construct an infinite set $\mathcal{A}\subset \mathbb{N}$ such that for each $s_{1}, s_{2}\in \mathcal{A}, s_{1}\neq s_{2}$ we have non-isomorphic curves corresponding to $s_{1}, s_{2}$.

\end{proof}

\begin{exam}
{\rm Let $P=(42,-216)$ be the generator of the free part of $E'(\QQ)$, where $E'$ is constructed in the proof of the theorem above. We have $2P=(105/4, 405/8)$ and
$$
\phi^{-1}(2P)=(1/4, -13/16),
$$
i.e., $u_{2}=1, v_{2}=4$, with $2u_{2}v_{2}=8, u_{2}^2-v_{2}^2=-15$. The corresponding triple is $(a_{2}, b_{2}, c_{2})=(8,15,17)$. On the curve $E_{8,15}$ we get the point $(8^{3}\cdot 15, 2296320)$ of infinite order.

In case of $3P=(10698/49, 1097928/343)$ we get
$$
\phi^{-1}(3P)=(69/35, -7274/1225)
$$
and the corresponding triple is $(a_{3}, b_{3}, c_{3})=(4830, 3536,5986)$. The point of infinite order on $E_{a_{3},b_{3}}$ is $(-4830^3\cdot 3536, 3750258651849283392000)$. }
\end{exam}
We believe that the following is true.

\begin{conj}
There are infinitely many primitive Pythagorean triples $(a,b,c)$ such that the elliptic curve
$$
E_{a,b}:\;y^2=x(x+a^4)(x+b^4)
$$
has rank zero.
\end{conj}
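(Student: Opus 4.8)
\medskip

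\noindent\emph{On the conjecture above.} The natural line of attack runs through a complete two-descent. Since $E_{a,b}$ has its full two-torsion over $\QQ$ one has the descent homomorphism $\delta\colon E_{a,b}(\QQ)/2E_{a,b}(\QQ)\hookrightarrow \QQ^{*}/(\QQ^{*})^{2}\times\QQ^{*}/(\QQ^{*})^{2}$, and the exact sequence $0\to E_{a,b}(\QQ)/2E_{a,b}(\QQ)\to\operatorname{Sel}^{(2)}(E_{a,b}/\QQ)\to\mathrm{Sha}(E_{a,b}/\QQ)[2]\to 0$ gives $\#\operatorname{Sel}^{(2)}(E_{a,b}/\QQ)=2^{r+2}\cdot\#\mathrm{Sha}(E_{a,b}/\QQ)[2]$, where $r=\operatorname{rank}E_{a,b}(\QQ)$; the exponent is $r+2$ because the torsion subgroup $\ZZ_{2}\times\ZZ_{2n}$ contributes a group of order $4$ to $E_{a,b}(\QQ)/2E_{a,b}(\QQ)$ (and in fact $(0,0)\in 2E_{a,b}(\QQ)$, since $2(a^{2}b^{2},\,a^{2}b^{2}c^{2})=(0,0)$). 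Hence it would be enough to exhibit an infinite family of primitive triples $(a,b,c)$ for which $\#\operatorname{Sel}^{(2)}(E_{a,b}/\QQ)=4$: each such triple then has $r=0$ (and, as a bonus, $\mathrm{Sha}(E_{a,b}/\QQ)[2]=0$), so this would settle the conjecture.

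To bound the Selmer group it helps to observe that $E_{a,b}$ depends, up to $\QQ$-isomorphism, only on the ratio $b/a$: the substitution $(x,y)\mapsto(a^{4}x,a^{6}y)$ identifies $E_{a,b}$ with $y^{2}=x(x+1)(x+(b/a)^{4})$, and --- by the Gr\"{o}bner basis computation performed above --- two members of the family are isomorphic precisely when their ratios agree up to inversion. The family is thus genuinely one-parameter, carried by $(u,v)$ with $a=2uv$, $b=|u^{2}-v^{2}|$, $c=u^{2}+v^{2}$, and its primes of bad reduction divide $2ab(a^{4}-b^{4})=2ab(a^{2}-b^{2})c^{2}$. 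The plan is to restrict $(u,v)$ to a sub-family cut out by finitely many congruences (say $v$ fixed and $u$ running through an arithmetic progression, or $u\equiv u_{0}$, $v\equiv v_{0}\pmod{M}$ for a suitable modulus $M$), to compute for each bad prime $p$ the local image $\delta_{p}\bigl(E_{a,b}(\QQ_{p})/2E_{a,b}(\QQ_{p})\bigr)$ inside $\QQ_{p}^{*}/(\QQ_{p}^{*})^{2}\times\QQ_{p}^{*}/(\QQ_{p}^{*})^{2}$, and to choose the congruences so that the intersection of these local images over all places of $\QQ$ contains only the trivial classes. Concretely, the nontrivial Selmer classes to be killed correspond to certain square-free divisors of $2ab(a^{2}-b^{2})c^{2}$ (with signs), and their solubility at a bad prime $p$ is governed by Legendre symbols of the shape $\left(\tfrac{a\pm b}{p}\right)$, $\left(\tfrac{\pm ab}{p}\right)$, $\left(\tfrac{c}{p}\right)$; quadratic reciprocity rewrites these in terms of the residues of $a\pm b$, $ab$ and $c$ modulo small fixed integers together with their signs, and for those Dirichlet's theorem on primes in arithmetic progressions supplies infinitely many admissible members of the sub-family.

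The obstruction sits exactly here. In the cleanest version of the argument one would want the integers $a$, $b$, $a-b$, $a+b$, $c$ to have tightly controlled factorisations --- ideally to be prime up to bounded factors --- all at once; but the existence of infinitely many primitive Pythagorean triples with, say, $a-b$, $a+b$ and $c$ simultaneously prime is a statement of Schinzel ``Hypothesis H'' type, and is open. The real content of a proof must therefore be to circumvent this, either by exploiting the built-in square factor $c^{2}$ of $a^{4}-b^{4}$ --- which, together with well-chosen congruences, should collapse enough of the potential Selmer classes that one need only pin the relevant quadratic symbols down against a \emph{bounded} set of residues, whatever the factorisation of $a\pm b$ turns out to be --- or by isolating a special infinite sub-family, cut out by a curve of low genus in the spirit of the quartic $C$ used above, along which the relevant factorisations are forced to be restricted. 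The first route seems the more promising, but making the ensuing case analysis clean is the hard step, and it is this that keeps the statement a conjecture.

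A different, analytic, route would be to show that the global root number $W(E_{a,b})$ equals $+1$ for a positive proportion of the triples in the family --- a finite local computation at the places dividing $2ab(a^{4}-b^{4})$, which one expects to equidistribute --- and then to prove that $L(E_{a,b},1)\neq0$ for infinitely many of those triples; by modularity of $E_{a,b}$ together with Kolyvagin's theorem this would give $\operatorname{rank}E_{a,b}(\QQ)=0$. Unconditional non-vanishing of the central $L$-value in a family of elliptic curves that genuinely varies in moduli --- as opposed to a family of quadratic twists of a fixed curve --- is itself hard, so this merely exchanges one difficulty for another; under a suitable Riemann hypothesis, or via known non-vanishing theorems for related families, it might nevertheless come within reach.
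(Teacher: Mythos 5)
This statement is labelled a \emph{Conjecture} in the paper, and the paper gives no proof of it: the only supporting material is a numerical table of $45$ primitive triples with $a<b<10^{4}$ (out of $890$ in that range) for which {\sc Magma}'s {\tt RankBound} returns $0$. So there is no ``paper's own proof'' to compare against, and your text, quite rightly, does not claim to supply one either: it is a survey of the two standard attacks (a complete two-descent aiming at $\#\operatorname{Sel}^{(2)}=4$, and root number plus non-vanishing of $L(E_{a,b},1)$ plus Kolyvagin) together with an accurate diagnosis of where each one stalls. The technical scaffolding you set up is correct as far as it goes: $(a^{2}b^{2},a^{2}b^{2}c^{2})$ is indeed a rational point of order $4$ doubling to $(0,0)$, so the torsion contributes a group of order $4$ to $E_{a,b}(\QQ)/2E_{a,b}(\QQ)$ and the exponent $r+2$ in your Selmer count is right; the reduction to the single parameter $b/a$ via $(x,y)\mapsto(a^{4}x,a^{6}y)$ is also correct. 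Your identification of the obstruction is the honest one: controlling the two-descent uniformly requires factorisation information about $a\pm b$ and $c$ of Hypothesis-H strength, and the family varies in moduli rather than being a quadratic-twist family, so the analytic route is not covered by existing non-vanishing theorems.

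The net assessment is therefore: no gap in the sense of a flawed argument, because no argument is offered --- by you or by the authors. The statement remains open, and your proposal is a reasonable map of the territory rather than a proof. If you want to push further, the closest developed analogue in the literature is the theory of concordant forms $x^{2}+y^{2}=z^{2}$, $x^{2}+k y^{2}=t^{2}$ (Ono and others), where rank-zero criteria for closely related curves $y^{2}=x(x+1)(x+k)$ are proved by exactly the kind of two-descent-with-congruences you sketch; adapting those criteria to the constraint that the parameter be of the special shape $(b/a)^{4}$ for a Pythagorean pair $(a,b)$ is the concrete place where your first route would have to be made to work.
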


\begin{rem}{\rm  We performed a small numerical search for primitive triples $(a,b,c)$ such that the rank of $E_{a,b}$ is zero. We used the {\sc MAGMA} computational package and checked that in the range $a<b<10^4$ (in this range there are exactly 890 primitive Pythagorean triples) there are at least 45 triples $(a,b,c)$ for which the rank of $E_{a,b}$ is equal to zero. In order  to get the data we were interested in, first we generated all triples $(a,b,c)$ in the considered range and dealt with the elliptic curve $E_{a,b}$. Next, we used the procedure {\tt RankBound(E)}, implemented in {\sc Magma}, which gives an upper bound for the rank of the elliptic curve $E$. If the computed bound were equal to 0, we printed the triple $(a,b,c)$. In the table below we collect the data we obtained. However, it is very likely that in the range $a<b<10^4$ there are many more triples $(a,b,c)$ such that the rank of $E_{a,b}$ is 0.

\begin{equation*}
\begin{array}{|l|l|l|}
  \hline
  (a,b,c)           & (a,b,c)           & (a,b,c) \\
 \hline
 (3,4,5)	        &(581,3420,3469)	&(2380,4611,5189) \\
 (5,12,13)	        &(588,2365,2437)	&(2436,2923,3805) \\
 (12,35,37)	        &(612,1075,1237)	&(2725,5628,6253) \\
 (13,84,85)	        &(660,2989,3061)	&(3267,6956,7685) \\
 (19,180,181)	        &(685,9372,9397)	&(3612,6955,7837) \\
 (24,143,145)	        &(780,6059,6109)	&(3751,6840,7801) \\
 (33,56,65)	        &(913,3384,3505)	&(4180,8541,9509) \\
 (64,1023,1025)	        &(924,5893,5965)	&(4251,5180,6701) \\
 (69,2380,2381)  	&(949,2580,2749)	&(4469,5100,6781) \\
 (115,252,277)	        &(1403,1596,2125)	&(4740,5341,7141) \\
 (180,299,349)	        &(1507,9324,9445)	&(5365,9828,11197) \\
 (319,360,481)	        &(1820,8181,8381)	&(5633,7656,9505) \\
 (339,6380,6389)	&(2059,2100,2941)	&(6125,6612,9013) \\
 (473,864,985)	        &(2147,6204,6565)	&(6204,7747,9925) \\
 (540,629,829)	        &(2299,7140,7501)	&(6811,8460,10861) \\ \hline
\end{array}
\end{equation*}
\begin{center}Table. Primitive Pythagorean triples $(a,b,c)$ such that the curve $E_{a,b}$ has rank 0. \end{center}

}
\end{rem}

\section{A general question}\label{sec4}
Now we ask a more difficult question: Does there exist a pair of Pythagoran triangles $(a,b,c)$ and $(A,B,C)$ satisfying
\begin{equation}
\label{general}
\frac{A}{a}=\mu,\ \ \frac{B}{b}=\nu
\end{equation}
where $\mu,\nu$ are given positive rationals. If at least one Pythagorean triple can be non-primitive it is easy to see that the answer depends only on the ratio $\nu/\mu$. Indeed, if triples $(a,b,c), (A,B,C)$ solve the equation (\ref{general}) then the triples $(\mu a, \mu b, \mu c), (A,B,C)$ solve the equation (\ref{general}) with $\mu=1$ and $\nu$ replaced by $\nu/\mu$. Therefore in the sequel we confine ourselves to the pairs with $\mu=1$.

However, before we concentrate on the system (\ref{general}) we state an easier question concerning characterization of all possible pairs $r_{1}, r_{2}$ of positive rational numbers such that there are Pythagorean triangles $(a,b,c)$ and $(A,B,C)$ satisfying
\begin{equation}
\label{general1}
\frac{A}{a}=r_{0},\ \ \frac{B}{b}=r_{1},\ \ \frac{C}{c}=r_{2}.
\end{equation}
As we already noted, without loss of generality, we can assume that $r_{0}=1$. Thus, our problem is equivalent with characterization of solutions of the system
\begin{equation*}
a^2+b^2=c^2,\quad a^2+r_{1}^2b^2=r_{2}^2c^2.
\end{equation*}
By writing $a=2uv, b=u^2-v^2, c=u^2+v^2$ we are interested in solutions of the Diophantine equation
\begin{equation}
(u^2-v^2)^2r_{1}^2-(u^2+v^2)^2r_2^{2}=-(2uv)^2
\end{equation}
with respect to $r_{1}, r_{2}$. However, this is simple because our equation is quadratic in $r_{1}, r_{2}$ and we have the rational point at infinity $(u^2+v^2:u^2-v^2:0)$. In consequence, we obtain the parametrization in the following form
$$
r_{1}=\frac{((u^4-v^4)w+2uv)((v^4-u^4)w+2uv)}{2(u^2-v^2)^2(u^2+v^2)w},\; r_{2}=\frac{(u^4-v^4)^2w^2+4u^2v^2}{2(u^2-v^2)(u^2+v^2)^2w}.
$$
A quick computation reveals that our ratios $r_{1}, r_{2}$ are positive (with fixed values of $u, v$) if and only if the condition $0<w<\frac{2uv}{u^4-v^4}$ is satisfied.
\bigskip

Let us return now to our initial question. Of course finding the solutions of the system (\ref{general}) is more difficult. Indeed, for a concrete $\nu$ we are confronted with the problem of positivity of the rank of the elliptic curve
$$
E_{\nu}:y^2=x(x+1)(x+\nu^2)
$$
(compare the paper \cite{ML} for the case $\nu\in\mathbb N$). We prove a general but weaker theorem.

\begin{thm}\label{all-classes}
For each $\bar{\nu}\in\mathbb Q^{+}$ there exist $s\in\mathbb Q^{+}$ such that for $\nu=\bar{\nu}s^2$  there exist $(a,b,c)$ and $(A,B,C)$ satisfying $A=a$, $B=\nu b$.
\end{thm}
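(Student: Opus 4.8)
The plan is to reduce the problem to a single Diophantine condition on the cubic $t^{3}-t$ and then to recognise that condition as a known statement about congruent numbers.

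Since the ratios $A/a,B/b$ are scale-invariant, it suffices to produce $a,b,s\in\QQ^{+}$ with $a^{2}+b^{2}$ and $a^{2}+\bar\nu^{2}s^{4}b^{2}$ both in $\QQ^{*2}$. Dividing by $b^{2}$ and putting $x=(a/b)^{2}$, the constraints $x\in\QQ^{*2}$ and $x+1\in\QQ^{*2}$ force $x=\big(\tfrac{k^{2}-1}{2k}\big)^{2}$ for some $k\in\QQ\setminus\{0,\pm1\}$ (with $x+1=\big(\tfrac{k^{2}+1}{2k}\big)^{2}$), while the remaining constraint $x+\bar\nu^{2}s^{4}\in\QQ^{*2}$, multiplied by $(2k)^{2}$, says that $2k\bar\nu s^{2}$ is a leg of a rational right triangle with other leg $k^{2}-1$. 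The admissible co-legs of $k^{2}-1$ are exactly the numbers $\tfrac{(k^{2}-1)(n^{2}-1)}{2n}$, $n\in\QQ$, so we need $s^{2}=\tfrac{(k^{2}-1)(n^{2}-1)}{4kn\bar\nu}$; after a choice of $k$ and $n$, a rational $s$ exists precisely when
$$
(k^{3}-k)(n^{3}-n)\,\bar\nu\in\QQ^{*2},
$$
and the corresponding pair of triangles is $\bigl(k^{2}-1,\,2k,\,k^{2}+1\bigr)$ together with $\bigl(k^{2}-1,\,\tfrac{(k^{2}-1)(n^{2}-1)}{2n},\,\tfrac{(k^{2}-1)(n^{2}+1)}{2n}\bigr)$, cleared of denominators. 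So the theorem reduces to finding $k,n\in\QQ\setminus\{0,\pm1\}$ solving the displayed equation.

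Next I would identify the classes realised by $t^{3}-t$ modulo squares. For $d\in\QQ^{*}$ the substitution $t=X/d$ turns the equation $t^{3}-t=d\cdot(\text{square})$, with $t\neq0,\pm1$, into the assertion that $Y^{2}=X^{3}-d^{2}X$ has a rational point off the $2$-torsion; for squarefree $d$ this is the congruent number curve of $d$, with torsion exactly $\ZZ_{2}\times\ZZ_{2}$, so such a point exists iff $d$ is a congruent number. Hence the classes taken by $t^{3}-t$ are precisely $\pm c$ for $c$ a congruent number, and, since $\bar\nu>0$, the displayed condition is solvable for suitable $k,n$ iff $\bar\nu$ is, modulo squares, a product of two congruent numbers. (This also shows the set of admissible $\nu$ is a union of square classes, which is why only the square class of $\nu$, not $\nu$ itself, can be prescribed.)

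The heart of the matter, and the step I expect to be the main obstacle, is therefore the purely arithmetic assertion that \emph{every positive rational is a product of two congruent numbers modulo squares}; equivalently, that the set $\mathcal C$ of congruent-number classes satisfies $\mathcal C+\mathcal C=\QQ_{>0}^{*}/\QQ^{*2}$ in this $\FF_{2}$-vector space. Here I would reduce to a squarefree positive integer $\bar\nu$ and feed in the unconditional supply of congruent numbers coming from Heegner points (primes $p\equiv 5,7\pmod 8$ are congruent, $2p$ is congruent for $p\equiv 3\pmod 8$) together with the Monsky and Smith density results on congruent numbers modulo $8$, combining these to write the class of $\bar\nu$ as $c_{1}c_{2}$ with $c_{1},c_{2}$ congruent. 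Essentially all of the difficulty lies in this number-theoretic input. Once $c_{1},c_{2}$ are available, I would pick non-torsion rational points on the congruent number curves of $c_{1}$ and of $c_{2}$ lying on the unbounded real component, obtaining $k,n>1$ with $k^{3}-k\sim c_{1}$ and $n^{3}-n\sim c_{2}$; then $(k^{3}-k)(n^{3}-n)\bar\nu\in\QQ^{*2}$, the non-triviality of the points forces $k,n\neq0,\pm1$, and clearing denominators makes all of $a,b,c,A,B,C$ positive integers. Everything apart from the congruent-number input is routine bookkeeping.
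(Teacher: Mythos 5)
Your reduction is correct, and it is worth noting that it lands on exactly the surface the paper works with: clearing denominators, your condition $(k^{3}-k)(n^{3}-n)\bar\nu\in\QQ^{*2}$ is the paper's surface $\bar\nu xyS^{2}=(x^{2}-1)(y^{2}-1)$ multiplied through by $xy$, and your identification of the square classes of $t^{3}-t$ with $\pm c$ for $c$ a congruent number is accurate. The genuine gap is precisely the step you flag as the main obstacle: the assertion that every positive square class is a product of two congruent-number classes ($\mathcal C+\mathcal C=\QQ^{*}_{>0}/\QQ^{*2}$) is not a known theorem, and the inputs you cite do not combine to give it. The Heegner-point results produce congruent numbers that are primes (or twice primes) in prescribed residue classes modulo $8$, and the Monsky--Smith results are density statements about Selmer ranks; neither lets you hit a prescribed target class. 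Concretely, for squarefree $\bar\nu=p_{1}\cdots p_{r}$ you must choose a factorization $\bar\nu=e_{1}e_{2}$ and an auxiliary squarefree $d$ coprime to $\bar\nu$ with $de_{1}$ and $de_{2}$ both congruent; already for $\bar\nu$ a product of several primes all $\equiv 1\pmod 8$ no unconditional result allows you to prescribe the prime support of a congruent number in this way. So the proof is incomplete at its decisive point, and I do not see how to repair it along these lines with current knowledge.

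The paper sidesteps all of this by exhibiting an explicit rational curve on the surface: the ansatz $x=T+1$, $y=pT+1$, $S=tT$ forces $p=\tfrac14\bar\nu t^{2}$ and $T=-\tfrac{2(\bar\nu t^{2}+4)}{3\bar\nu t^{2}}$, giving $x=\tfrac{\bar\nu t^{2}-8}{3\bar\nu t^{2}}$, $y=\tfrac{2-\bar\nu t^{2}}{6}$, and the whole theorem reduces to a polynomial identity valid for every $\bar\nu$ simultaneously (the paper then goes further and produces infinitely many such curves via a fibration argument). Ironically, the logical dependence runs opposite to your plan: the elementary parametric solution proves the theorem, and the theorem in turn \emph{implies} your arithmetic claim that every positive rational is, modulo squares, a product of two congruent numbers --- a statement I would not know how to reach directly from the congruent-number literature. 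Your framing is a nice reinterpretation of what the theorem says, but as a proof strategy it routes the easy statement through a much harder (indeed open-looking) one.
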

\begin{proof} In order to get the result we consider the following Pythagorean triples
$$
a=2x,\ b=x^2-1,\  c=x^2+1, A=2x,\ B=\frac{(y^2-1)x}{y},\ C=\frac{(y^2+1)x}{y}.
$$
Thus, we consider the equation
$$
\frac{B}{b}=\frac{(y^2-1)x}{(x^2-1)y}=\bar{\nu}s^2.
$$
Equivalently, putting $S=\frac{x^2-1}{x}s$, we are interested in the surface
$$
H:\;\bar{\nu}xyS^2=(x^2-1)(y^2-1)
$$
defined over the rational function field $\QQ(\bar{\nu})$. First we construct a rational curve lying on $H$. We are looking for a rational curve of the form
$$
x=T+1,\quad y=pT+1,\quad S=tT,
$$
where $p, T$ need to be determined. A quick computation with the form of our substitution and the equation defining $H$, reveals that
$$
p=\frac{1}{4}vt^2, \quad T=-\frac{2(\bar{\nu}t^2+4)}{3\bar{\nu}t^2}
$$
does the job. In consequence, we get that the curve
$$
L:\;\begin{cases}
x=\frac{\bar{\nu}t^2-8}{3\bar{\nu}t^2}=:f(t),\\
y=\frac{2-\bar{\nu}t^2}{6},\\
S=-\frac{2 \left(t^2 v+4\right)}{3 t v}.
\end{cases}
$$
lies on the surface $H$. We show that there are infinitely many rational curves lying on $H$. In order to do this we treat the surface $H$ as a genus one curve, say $\mathcal{E}$, defined over the rational function field $\QQ(\bar{\nu},t)$ in the $(y,S)$ plane. Here we use the base change $x=f(t)$. The curve $\mathcal{E}$ is written in the Weierstrass form
$$
\mathcal{E}:\;Y^2=X(X-\bar{\nu}(f(t)^3-f(t)))(X+\bar{\nu}(f(t)^3-f(t))),
$$
where the map $\psi:\; H\ni (y,S)\mapsto (X,Y)\in\mathcal{E}$ is given by
$$
X=vf(t)(f(t)^2-1)y,\quad Y=Sv^2f(t)^2(f(t)^2-1)y.
$$
On the curve $\mathcal{E}$ we have the point $P$ coming from the rational curve $L$ lying on $H$. More precisely, $P=(X,Y)$, where
\begin{align*}
X&=\frac{4 \left(\bar{\nu}t^2-8\right) \left(\bar{\nu}t^2-2\right)^2 \left(\bar{\nu}t^2+4\right)}{81 \bar{\nu}^2t^6},\\
Y&=-\frac{8 \left(\bar{\nu}t^2-8\right)^2 \left(\bar{\nu}t^2-2\right)^2 \left(\bar{\nu}t^2+4\right)^2}{729 \bar{\nu}^3t^9}.
\end{align*}
Because for any given $\bar{\nu}$ positive rational we have $X, Y\in\QQ(t)\setminus\QQ[t]$ we immediately get (via the analogue of the Nagell-Lutz theorem for a rational function field) that the point $P$ is of infinite order on $\mathcal{E}$. This implies the existence of infinitely many rational curves which lie on $H$ (coming from the points $\psi^{-1}(mP)$, where $m=1,2,\ldots $), with fixed $x=f(t)$. Finally, we see that, for any given $\bar{\nu}$ there are infinitely many parametric families of pairs of Pythagorean triples $(a,b,c), (A,B,C)$ satisfying $a=A, B=\nu b=\bar{\nu}s^2 b$ and hence the result.

\end{proof}

\begin{cor}
The set
$$
\Gamma=\{\nu\in\QQ_{>0}:\;\mbox{the elliptic curve}\;E_{\nu}\;\mbox{has positive rank}\}
$$
is dense in the Euclidean topology in the set $\RR_{>0}$.
\end{cor}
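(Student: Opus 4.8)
The plan is to combine Theorem~\ref{all-classes} with the positivity-of-rank criterion of Section~\ref{sec2}. The key point is that the construction behind Theorem~\ref{all-classes}, specialised to $A=a$, $B=\nu b$, produces not merely a pair of triangles but an explicit \emph{non-torsion} rational point on $E_{\nu}$ for a generic choice of the parameters, so that such $\nu$ really lie in $\Gamma$; one then checks that the $\nu$ obtained this way sweep out a dense subset of $\RR_{>0}$.

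First I would translate the problem back to $E_{\nu}$ as in Section~\ref{sec2}. Any nonzero rational solution of $a^2+b^2=c^2$, $A^2+B^2=C^2$ with $A/a=1$, $B/b=\nu$ gives the point $Q^{\ast}=(a^2A^2,a^2A^2cC)$ on $y^2=x(x+A^2b^2)(x+a^2B^2)$, which under $x\mapsto a^2b^2x$, $y\mapsto a^3b^3y$ becomes exactly $E_{\nu}$. Since $a^2A^2>0$ while the $2$--torsion lies over $x\le 0$, the point $Q^{\ast}$ is neither $2$--torsion nor the identity; and $Q^{\ast}\in 2E(\QQ)$ because $a^2A^2=(aA)^2$, $a^2A^2+A^2b^2=(Ac)^2$ and $a^2A^2+a^2B^2=(aC)^2$ are rational squares. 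Hence, exactly as in the proof of the theorem of Section~\ref{sec2}, $Q^{\ast}$ is of infinite order unless $a^2A^2=B^2b^2$ (the case $|Q^{\ast}|=4$) or $4a^2A^2c^2C^2=(a^2A^2-B^2b^2)^2$ (the case $|Q^{\ast}|=3$, which must be added here since Section~\ref{sec2} does not treat it explicitly). So it suffices to produce, for $\nu$ ranging over a dense subset of $\RR_{>0}$, a nonzero rational solution with $A=a$, $B=\nu b$ avoiding both relations.

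Such solutions are furnished by Theorem~\ref{all-classes}. Taking $t=1$ on the rational curve $L\subset H$ gives the rational point $(x,y)=\bigl(\tfrac{\bar\nu-8}{3\bar\nu},\tfrac{2-\bar\nu}{6}\bigr)$ on $H$, hence the nonzero rational solution $a=A=2x$, $b=x^2-1$, $c=x^2+1$, $B=\tfrac{(y^2-1)x}{y}$, $C=\tfrac{(y^2+1)x}{y}$ with ratio $\nu=B/b=\tfrac{x(y^2-1)}{y(x^2-1)}$; a short computation then reduces this to the closed form
\[
\nu=g(\bar\nu):=\frac{\bar\nu\,(\bar\nu-8)^{2}}{16\,(\bar\nu-2)^{2}}.
\]
For $\bar\nu\in(0,2)$ none of $x,y$ equals $0$ or $\pm1$, so the solution and the curve $E_{g(\bar\nu)}$ are nondegenerate, and each of the two relations above becomes, after substitution, a polynomial equation in $\bar\nu$ that one checks (at $\bar\nu=1$, say) is not an identity, hence holds for only finitely many $\bar\nu$. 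Thus $g(\bar\nu)\in\Gamma$ for every $\bar\nu\in\QQ\cap(0,2)$ outside a finite set $\Sigma$. Finally, $g$ is continuous and positive on $(0,2)$ with $g(\bar\nu)\to0$ as $\bar\nu\to0^{+}$ and $g(\bar\nu)\to+\infty$ as $\bar\nu\to2^{-}$, so by the intermediate value theorem $g$ maps $(0,2)$ onto $(0,\infty)$; given $r>0$ and $\varepsilon>0$, pick $\bar\nu_{0}\in(0,2)$ with $g(\bar\nu_{0})=r$ and, by continuity, a rational $\bar\nu_{1}$ close to $\bar\nu_{0}$ with $\bar\nu_{1}\notin\Sigma$ and $|g(\bar\nu_{1})-r|<\varepsilon$, so that $g(\bar\nu_{1})\in\Gamma$. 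Hence $\Gamma$ is dense in $\RR_{>0}$, which proves the corollary.

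The main obstacle is making sure the point $Q^{\ast}$ is genuinely of infinite order, i.e.\ that the pair of triangles from Theorem~\ref{all-classes} does not accidentally correspond to a torsion point; this is precisely the input from Section~\ref{sec2}, and the only subtlety is that one must also exclude the order-$3$ possibility. Once one knows that ``$Q^{\ast}$ torsion'' forces one of two algebraic relations in $\bar\nu$, the remaining steps --- specialising the parametrisation from the proof of Theorem~\ref{all-classes}, simplifying to the formula for $g$, verifying those two relations are not identities in $\bar\nu$, and the intermediate value argument --- are routine computation or elementary analysis.
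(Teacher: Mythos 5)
Your proof is correct in substance, but it takes a genuinely different route from the paper's. The paper does not pass through Theorem~\ref{all-classes} or the torsion analysis of Section~\ref{sec2} at all: it treats $y^2=x(x+1)\nu^2+x^2(x+1)$ as a conic in the $(y,\nu)$-plane over $\QQ(x)$, performs the base change $x=t^2-1$ to create the rational point $(\nu,y)=(0,t(t^2-1))$, parametrizes to obtain $\nu=\nu(u)$ together with the point $P=(t^2-1,y(u))$ on $E_{\nu(u)}$, and gets density from the continuity of $u\mapsto\nu(u)$ with $\nu(0)=0$ and $\nu(u)\to+\infty$ as $u\to\sqrt{t^2(t^2-1)}$. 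You instead specialize the curve $L$ from the proof of Theorem~\ref{all-classes} at $t=1$; your closed form $\nu=g(\bar\nu)=\bar\nu(\bar\nu-8)^2/(16(\bar\nu-2)^2)$ is correct, $g$ does map $(0,2)$ onto $(0,\infty)$, and certifying positive rank via the $2$-descent/torsion classification of Section~\ref{sec2} is legitimate since $Q^{\ast}$ does satisfy the ``all three $x-e_i$ are squares'' criterion. Your route even has an advantage: the paper asserts that $P$ is ``clearly'' of infinite order over $\QQ(u)$ and silently specializes to rational $u$, which strictly needs a specialization argument, whereas your finite exceptional set $\Sigma$ handles the analogous issue explicitly. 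Two small corrections. First, the order-$3$ case you add is vacuous rather than missing from Section~\ref{sec2}: since $(0,0)\in 2E(\QQ)$ there is a rational point of order $4$, so the torsion is $\ZZ_2\times\ZZ_{2n}$ with $n\in\{2,4\}$ and contains no $3$-torsion; including the check is harmless but unnecessary. Second, ``$x,y\neq 0,\pm1$'' guarantees the triangles are nondegenerate but not that $E_{g(\bar\nu)}$ is nonsingular: you must also exclude $g(\bar\nu)=1$, which does occur in $(0,2)$ by the intermediate value theorem; as these are roots of the cubic $\bar\nu(\bar\nu-8)^2=16(\bar\nu-2)^2$ they form a finite set and can simply be absorbed into $\Sigma$, so the argument survives intact.
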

\begin{proof}
In order to get the result we could use the parametric curve constructed in the proof of Theorem \ref{all-classes}. However, instead we will treat the equation $y^2=x(x+1)(x+\nu^{2})$ defining the curve $E_{\nu}$ as an equation in the $(y,\nu)$ plane, i.e.,
$$
y^2=x(x+1)\nu^{2}+x^2(x+1),
$$
which is a genus 0 curve defined over rational function field $\QQ(x)$. After rational base change $x=t^2-1$ we see that the curve
$$
y^2=(t^2-1)t^2\nu^{2}+(t(t^2-1))^2
$$
has $\QQ(t)$-rational point $(\nu,y)=(0,t(t^2-1))$. Thus we obtain the parametric solution in the following form
$$
\nu=\frac{2 t \left(t^2-1\right) u}{t^2 \left(t^2-1\right)-u^2},\quad y=\frac{t(t^2-1)((t^2-1)t^2+u^2)}{t^2(t^2-1)-u^2}.
$$
We thus see that, for given $t\in\QQ$, on the elliptic curve $E_{\nu(u)}$, treated over rational function field $\QQ(u)$, we have the point
$P=(t^2-1,y(u))$. It is clear that the point $P$ is of infinite order on $E_{\nu(u)}$.

To finish the proof, note that for any given positive $t\in\QQ$ satisfying the property $t^2(t^2-1)>0$, the rational map $\nu:\RR\ni u\mapsto \frac{2 t \left(t^2-1\right) u}{t^2 \left(t^2-1\right)-u^2}\in\RR$ is continuous
and has the obvious property
 \begin{equation*}
\nu(0)=0,\quad\quad \lim_{u\rightarrow u_{0}}\nu(u)=+\infty,
\end{equation*}
where $u_{0}=\sqrt{t^2(t^2-1)}$. The density of $\QQ$ in $\RR$ together with the above property of the map $\nu=\nu(u)$, immediately implies that the set $\nu(\QQ)\cap \RR_{>0}$ is dense in $\RR_{>}$ in the Euclidean topology. The theorem follows.

\end{proof}

\begin{rem}{\rm  Parameters $\nu$ for which the rank of $E_{\nu}$ is positive arise from a geometric problem, similar to that of congruent numbers. But our Theorem \ref{all-classes} illustrates a difference. Whereas beeing {\it congruent} depends only on the square-class of a considered number the situation with numbers $\nu$ is completely different: by our theorem each square-class contains relevant numbers $\nu$! }
\end{rem}

\bigskip

\noindent {\bf Acknowledgments.} We are grateful to the anonymous referee for careful reading of the manuscript and useful remarks which have improved the presentation.


\begin{thebibliography}{99}

\bibitem{Mag} W. Bosma, J. Cannon, and C. Playoust, {\it The Magma algebra system. I. The user language}, J. Symbolic Comput., {\bf 24}(3-4) (1997),
235-265.

\bibitem{ML}
MacLeod, A.J., {\it On a problem of John Leech}, Expositiones Mathematicae 23 (2005), pp. 271--279.

\bibitem{S} C. J. Smyth, {\it Ideal $9$th-order multigrades and Letac's elliptic curve}, Math. Comput., 57 (1991), 817--823.

\end{thebibliography}
\end{document}